\newtheorem{theorem}{Theorem}[section]
\newtheorem{proposition}[theorem]{Proposition}
\theoremstyle{definition}
\newtheorem{definition}[theorem]{Definition}
\theoremstyle{remark}
\numberwithin{equation}{section}
\begin{document}

\title[A unique positive entropy solution to a fractional Laplacian]{A note on the existence of a unique positive entropy solution to a fractional Laplacian with singular nonlinearities}

\author{Masoud Bayrami-Aminlouee}

\curraddr{Department of Mathematical Sciences, Sharif University of Technology, Tehran, P.O. Box 11365-9415, Iran.}
\email{masoud.bayrami1990@student.sharif.edu}

\author{Mahmoud Hesaaraki}
\address{Department of Mathematical Sciences, Sharif University of Technology, Tehran, P.O. Box 11365-9415, Iran.}
\email{hesaraki@sharif.edu}

\subjclass[2010]{Primary 35R11, 35J75, 35B09, 35A01}

\keywords{Fractional Laplacian, singular nonlinearity, positive entropy solution, uniqueness}

\begin{abstract} 
In this paper, we study the existence of a positive solution to the following elliptic problem:
$$
\begin{cases}
(-\Delta )^s u = u^{-q} +f(x)h(u)+\mu & \mathrm{in} \,\, \Omega,\\ u>0 & \mathrm{in} \,\, \Omega, \\ u=0 & \mathrm{in} \,\, \big(\mathbb{R}^N \setminus \Omega \big).
\end{cases}
$$
Here $\Omega \subset \mathbb{R}^N$ ($N > 2s$) is an open bounded domain with smooth boundary, $ s \in (0,1)$, and $q \in (0,1)$. For $s \in (0,\frac{1}{2})$, we take advantage of the convexity of $\Omega$. The operator $(-\Delta)^s$ indicates the restricted fractional Laplacian, and $\mu$ is a non-negative bounded Radon measure as a source term. The assumptions on $f$ and $h$ will be precise later. Besides, we will discuss the notion of entropy solution and its uniqueness for some specific measures.
\end{abstract}

\maketitle

\section{Introduction}

This paper is concerned with the existence of a positive solution to the following fractional problem: 
\begin{equation}
\label{Eq1}
\begin{cases}
(-\Delta )^s u = u^{-q} +f(x)h(u)+\mu & \mathrm{in} \,\, \Omega,\\ u>0 & \mathrm{in} \,\, \Omega, \\ u=0 & \mathrm{in} \,\,  \big(\mathbb{R}^N \setminus \Omega \big).
\end{cases}
\end{equation}
Here $\Omega \subset \mathbb{R}^N$ ($N > 2s$) is an open bounded domain with smooth boundary, $ s \in (0,1)$, and $q \in (0,1)$. But, we will use the convexity condition on $\Omega$ for $0 < s < \frac{1}{2}$. Besides, $\mu$ is a non-negative bounded Radon measure as a source term. The assumptions on $f(x)$ and $h(u)$ are as follows:
\begin{enumerate}[leftmargin=*, label=(\Alph*)]
\item \label{Con1} $f \in L^1(\Omega)$ and non-negative.
\item \label{Con2} $h: \mathbb{R}^+ \to \mathbb{R}^+$ is a nonlinear, non-increasing and continuous function such that:
$$ \lim_{s \to 0^+} h(s) \in (0,\infty], \quad \text{and} \quad \lim_{s \to \infty} h(s)=h(\infty)< \infty $$
\end{enumerate}
and also with the following growth conditions near zero and infinity: 
\begin{align*}
& \exists C_1, \underline{K} >0 \,\,\, \text{such that} \,\,\,  h(s) \leq \frac{C_1}{s^{\gamma}} \,\,\, \text{if} \,\,\, s<\underline{K}, \,\,\,
\text{for some} \,\,\, \gamma \in (0,1] \\
& \exists C_2, \overline{K}>0 \,\,\, \text{such that} \,\,\,  h(s)\leq \frac{C_2}{s^{\theta}} \,\,\, \text{if} \,\,\, s > \overline{K}, \,\,\,
\text{for some} \,\,\, \theta > 0. 
\end{align*}
The operator $(-\Delta)^{s} $ stands for the fractional Laplacian which is the non-local generalization of the differential operator $-\Delta u(x) = -\sum_{i=1}^{N} \frac{\partial^2 u}{\partial x_i^2} (x)$, and is given by a singular integral operator in the following way:
\begin{equation}
\label{Eq2}
(-\Delta )^{s} u(x) = C_{N,s} \, \mathrm{P.V.} \int_{\mathbb{R}^N} \frac{u(x)-u(y)}{|x-y|^{N+2s}} \, dy, \qquad u \in \mathcal{S}(\mathbb{R}^N).
\end{equation}
Where $\mathrm{P.V.}$ denotes the Cauchy principal value, $\mathcal{S}(\mathbb{R}^N)$ is the Schwartz space (space of rapidly decreasing functions on $\mathbb{R}^N$) and $C_{N,s}= \frac{4^s \Gamma(\frac{N}{2}+s)}{\pi^{\frac{N}{2}} |\Gamma(-s)|}$, is the normalization constant such that the following identity holds:
\begin{equation}
\label{Eq3}
(-\Delta )^s u = \mathcal{F}^{-1} \big(|\xi|^{2s} \hat{u}(\xi) \big).
\end{equation}
Here $\Gamma$ is the Gamma function and $\mathcal{F}u =\hat{u}$ denotes the Fourier transform of $u$. By restricting the above integral operator to act only on smooth functions that are zero outside $\Omega$, we have the restricted fractional Laplacian $(-\Delta_{|_{\Omega}})^s$, and the zero Dirichlet condition recovers as $ u \equiv 0 $ in $\big(\mathbb{R}^N \setminus \Omega \big)$.

The above two definitions, \eqref{Eq2} and \eqref{Eq3}, along with several other definitions given in \cite{Cite1}, are equivalent. One of them, introduced by Caffarelli and Silvestre \cite{Cite2}, is definition through harmonic extensions. This characterization of $(-\Delta)^s$, is the Dirichlet-to-Neumann map for a local degenerate elliptic PDE in the following way. Let $f \in \mathcal{S}(\mathbb{R}^N)$. If $U=U(x,y): \mathbb{R}^N \times [0,\infty) \to \mathbb{R}$ is the unique solution to
$$
\begin{cases}
\mathrm{div} \big(y^{1-2s} \nabla U \big)=\Delta_x U + \dfrac{1-2s}{y} U_y + U_{yy} =0  & \mathrm{in} \,\, \mathbb{R}^N \times (0,\infty), \\
U(x,0)=f(x)  & \mathrm{on} \,\, \mathbb{R}^N,
\end{cases}
$$
then for any $0<s<1$,
$ (-\Delta )^s f(x) = \frac{4^s\Gamma (s)}{2\Gamma(1-s)} \lim_{y \to 0^+} - y^{1-2s} U_{y}(x,y)$. Caffarelli and Silvestre derived some properties of the fractional Laplace operator from this local argument in the extension problem. For more details about fractional Laplacian and also for the basic properties of the fractional Laplace operator, see \cite{Cite5,Cite3, Cite6, Cite4}.

Problem \eqref{Eq1} arises as a steady-state for the related Heat equation, i.e.
$$
\begin{cases}
u_t + (-\Delta )^s u = u^{-q} +f(x)h(u)+\mu & \mathrm{in} \,\, \Omega \times (0,T), \\
u(x,0)=u_0(x) &   \mathrm{in} \,\, \mathbb{R}^N, \\
u(x,t)>0 &   \mathrm{in} \,\, \Omega \times (0,T), \\ 
u(x,t)=0 &   \mathrm{in} \,\, \big(\mathbb{R}^N \setminus \Omega \big) \times (0,T). 
\end{cases}
$$
The classical Heat equation models many diffusion problems in physics. The Heat equation, as a special case of the diffusive problems, describes how the distribution of some quantity like heat, evolves over time in a solid medium. More general, diffusion problems describe the propagation behavior of the micro-particle mass movement in matter resulting from the random motion of each micro-particle. Recently, studying diffusion problems by replacing the Laplace operator, and its usual variants, by a fractional Laplacian or other similar non-local operators started. For these recent progresses, see the papers \cite{Cite18, Cite19}. Therefore nowadays, studying fractional Heat type equations and their stationary problems are a favorite. 

Besides, the motivation to study problem \eqref{Eq1} comes from the following papers. In paper \cite{Cite13} authors proved the existence of solutions to the following problem:
\begin{equation}
\label{PRLazer}
\begin{cases}
-\Delta  u = f(x) h(u) + \mu   & \mathrm{in} \,\, \Omega \\
u>0   & \mathrm{in} \,\, \Omega \\
u=0   & \mathrm{on} \,\, \partial \Omega,
\end{cases}
\end{equation}
where $\Omega$ is a bounded domain of $\mathbb{R}^N$, $N>2$ and $f$, $h$ and $\mu$ are the same as assumptions in problem \eqref{Eq1} and this paper mainly inspires our problem. Problems as in \eqref{PRLazer} have been extensively studied both for their pure mathematical interest, \cite{Cite40.4042, Cite40.4044, Cite42.3, Cite42.2,Cite42,Cite42.1,Cite40.4041}, and for their relations with some physical phenomena in the theory of pseudoplastic fluids, \cite{Cite40.4040}. Moreover, see \cite{Cite40.4043} for the $p$-Laplacian evolution case of \eqref{PRLazer}.

In \cite{Cite16} Giacomoni, Mukherjee and Sreenadh investigated the existence and stabilization results for the following parabolic equation involving the fractional Laplacian with singular nonlinearity:
$$
\begin{cases}
u_t + (-\Delta )^s u = u^{-q} + f(x,u) &   \mathrm{in} \,\, \Omega \times (0,T), \\
u(x,0)=u_0(x) &   \mathrm{in} \,\, \mathbb{R}^N, \\
u(x,t)>0 &   \mathrm{in} \,\, \Omega \times (0,T), \\ 
u(x,t)=0 &   \mathrm{in} \,\, \big(\mathbb{R}^N \setminus \Omega \big) \times (0,T). 
\end{cases}
$$
Under suitable assumptions on the parameters and datum, they studied the related stationary problem and then using the semi-discretization in time with the implicit Euler method, they proved the existence and uniqueness of the weak solution. It is worth noting that in \cite{Cite20, Cite21}, the authors have shown the same results for the local version of this problem for the general $p$-Laplacian case. 

It is well-known that for $L^1$ or measure data problems, the notion of distributional solution does not ensure uniqueness to the following type of problems:
$$
\begin{cases}
-\mathrm{div} \big(a(x, \nabla u) \big)=\mu    & \Omega, \\
u=0   & \partial \Omega,
\end{cases}
$$
where $\mu$ is a bounded Radon measure or a function in $L^1(\Omega)$. There was an attempt to find some additional conditions on the distributional solutions in order to ensure both existence and uniqueness and some parallel developments achieved, \cite{Cite30}. Although Stampacchia’s definition of solution, \cite{Cite32}, implies uniqueness, it requires stronger conditions on the solution. Namely, his notion of solution uses a larger space of test functions rather than $C^{\infty}_c(\Omega)$. In \cite{Cite24}, the notion of entropy solution introduced for the $L^1$ data and then generalized to some specific measures, \cite{Cite25}. Dall'Aglio, \cite{Cite27}, introduced the notion of SOLA (Solution Obtained as Limit of Approximations) and Lions and Murat, \cite{Cite29,Cite28}, introduced the concept of renormalized solutions. Recently, for the fractional $p$-Laplacian Heat equation Teng, Zhang, and Zhou, \cite{Cite26}, have proved the existence and uniqueness of entropy solution with non-negative $L^1$ data. They have also demonstrated the equivalence of renormalized and entropy solutions. Also, Abdellaoui, Attar, and Bentifour in \cite{Cite36} have studied the existence of an entropy solution to a fractional $p$-Laplacian equation with weight. ‌Besides, see the work \cite{Cite38} in which authors developed an existence, regularity, and potential theory for nonlinear non-local equations involving measure data.
For another approach, we refer the readers to the work \cite{Cite38.1}, where the author studied some integro-differential equations involving measure data by the duality method. Also, see \cite{Cite38.2} for the duality approach to the fractional Laplacian with measure data.

Since our problem \eqref{Eq1} involves a measure term, $\mu$, it is natural to use the notion of entropy solution, which will be defined precisely later in section \ref{Section2}. 

The rest of the paper is organized as follows. In section \ref{Section2}, we will introduce the functional framework. Also, after defining the notions of weak solution and entropy solution to problem \eqref{Eq1}, we will outline our theorem about the existence result. In section \ref{Section3}, we will provide proof of this result. Finally, in section \ref{Section4}, after proving the uniqueness of entropy solution, we will show the existence of it for $L^1$ data, i.e. $\mu \in L^1(\Omega)$.

\section{Functional framework and main result}
\label{Section2}
Let $ 0 < s <1 $ and $ 1 \leq p < \infty $. The classical fractional Sobolev space defines as follows:
$$ W^{s,p}(\mathbb{R}^N) = \Bigg\{ u \in L^p(\mathbb{R}^N) \, : \, \int_{\mathbb{R}^N} \int_{\mathbb{R}^N} \frac{|u(x)-u(y)|^p}{|x-y|^{N+ps}} \, dxdy < \infty \Bigg\} $$
endowed with the Gagliardo norm:
$$ \|u\|_{W^{s,p}(\mathbb{R}^N)} = \|u\|_{L^p(\mathbb{R}^N)} + \Bigg( \int_{\mathbb{R}^N} \int_{\mathbb{R}^N} \frac{|u(x)-u(y)|^p}{|x-y|^{N+ps}} \, dxdy \Bigg)^{\frac{1}{p}}. $$
Also, we define 
$$ X^{s,p}(\Omega) =\Bigg\{ u: \mathbb{R}^N \to \mathbb{R} \,\, \mathrm{measurable}, \, u|_{\Omega} \in L^p(\Omega),\, \iint_{D_{\Omega}} \frac{|u(x)-u(y)|^p}{|x-y|^{N+ps}} \, dxdy < \infty \Bigg\}, $$
where $D_{\Omega}= \mathbb{R}^N \times \mathbb{R}^N \setminus \Omega^{c} \times \Omega^{c} $, with $\Omega^{c}=\mathbb{R}^N \setminus \Omega$ and $\Omega$ is a bounded smooth domain in $\mathbb{R}^N$. This is a Banach space with the following norm:
\begin{equation}
\label{Eq3.1}
\|u\|_{X^{s,p}(\Omega)} = \Bigg( \int_{\Omega} |u|^p \, dx + \iint_{D_{\Omega}} \frac{|u(x)-u(y)|^p}{|x-y|^{N+ps}} \, dxdy \Bigg)^{\frac{1}{p}}. 
\end{equation}
In the case $p=2$, we denote by $X^s(\Omega)$ the space $X^{s,2}(\Omega)$ which is a Hilbert space with the following scalar product:
$$ \langle u,v \rangle_{X^s(\Omega)} = \int_{\Omega} uv \, dx + \iint_{D_{\Omega}} \frac{(u(x)-u(y))(v(x)-v(y))}{|x-y|^{N+2s}} \, dxdy. $$
Moreover, we define $ X_0^{s,p}(\Omega) = \{ u \in X^{s,p}(\Omega) \, : \, u=0 \,\, \mathrm{a.e. \,\, in} \,\,  (\mathbb{R}^N \setminus \Omega ) \}$. Also, we let $X_0^s(\Omega)$ denotes $X_0^{s,2}(\Omega)$. It is easy to see that: 
$$ \Bigg( \int_{\mathbb{R}^N} \int_{\mathbb{R}^N} \frac{|u(x)-u(y)|^p}{|x-y|^{N+ps}} \, dxdy \Bigg)^{\frac{1}{p}} = \Bigg( \iint_{D_{\Omega}} \frac{|u(x)-u(y)|^p}{|x-y|^{N+ps}} \, dxdy \Bigg)^{\frac{1}{p}}, \,\,\, \forall u \in X_0^{s,p}(\Omega). $$
This equality defines another norm equivalent to the norm \eqref{Eq3.1} for $X_0^{s,p}(\Omega)$. We denote this norm by $\|u\|_{X_0^{s,p}(\Omega)}$, i.e.
$$ \|u\|_{X_0^{s,p}(\Omega)} = \Bigg( \iint_{D_{\Omega}} \frac{|u(x)-u(y)|^p}{|x-y|^{N+ps}} \, dxdy \Bigg)^{\frac{1}{p}}. $$
Then there exists a positive constant $C$ such that the following inequalities hold for all $u \in X_0^{s,p}(\Omega)$.
$$ \|u\|_{X_0^{s,p}(\Omega)} \leq \|u\|_{W^{s,p}(\mathbb{R}^N)} \leq C \|u\|_{X_0^{s,p}(\Omega)}. $$
It is worth mentioning that $X_0^{s,p}(\Omega)$ can also be identified by the closure of $C_c^{\infty}(\Omega)$ in $X^{s,p}(\Omega)$. Besides, for the Hilbert space case, we have:
$$ \|u\|_{X_0^s(\Omega)}^2 = 2 C_{N,s}^{-1} \| (-\Delta)^{\frac{s}{2}} u\|_{L^2(\mathbb{R}^N)}^2, $$
where $C_{N,s}$ is the normalization constant in the definition of $(-\Delta)^s$.

For the proofs of the above facts see \cite[subsection 2.2]{Cite31} and \cite{Cite3}.

For $ 0<r<\infty$, the Marcinkiewicz space $M^r(\Omega)$, is the set of all measurable functions $u: \Omega \to \mathbb{R}$, such that there exists $C>0$ with the following condition:
$$  \omega \Big( \Big\{x \in \Omega \, : \, |u(x)| \geq t \Big\} \Big) \leq \frac{C}{t^r}, \qquad \forall t>0. $$
Here $\omega$ denotes the Lebesgue measure on $\mathbb{R}^N$. This space is endowed with the following norm:
$$ \| u\|_{M^r(\Omega)} = \sup_{t>0} \, t \Bigg(\omega \Big( \Big\{x \in \Omega \, : \, |u(x)| \geq t \Big\} \Big) \Bigg)^{\frac{1}{r}}. $$
For every $1 < r < \infty$ and $0<\epsilon \leq r-1$, the following continuous embeddings hold, \cite{Cite24}:
\begin{equation}
\label{Eq3.2}
L^r(\Omega) \hookrightarrow M^r(\Omega) \hookrightarrow L^{r-\epsilon}(\Omega).
\end{equation}
Also the following continuous embeddings will be used in this paper.
\begin{align}
\label{Eq3.3}
& X_0^s(\Omega) \hookrightarrow L^t(\Omega), \qquad \forall t \in[1, 2_s^*], \\
\label{Eq3.350}
& X_0^{s,p}(\Omega) \hookrightarrow L^t(\Omega), \qquad \forall t \in[1, p_s^*],
\end{align}
where $2_s^*=\frac{2N}{N-2s}$ and $p_s^*=\frac{pN}{N-ps}$ are the Sobolev critical exponents. Moreover, these embeddings are compact for $1 \leq t < p_s^*$. See \cite[Theorem 6.5 and Theorem 7.1]{Cite3}.

Since we are dealing with the non-local operator $(-\Delta)^s$, a new class of test functions should be defined precisely instead of the usual one $C_c^{\infty}(\Omega)$, i.e.
$$ \mathcal{T}= \Big \{ \phi:\mathbb{R}^N \to \mathbb{R}  \,\, \big|\,\,  (-\Delta)^s \phi = \varphi, \,\,  \varphi \in  C^{\infty}_c(\Omega), \,\, \phi = 0 \,\,  \mathrm{in} \,\, \mathbb{R}^N \setminus \tilde{\Omega}, \,\,\mathrm{for \,\, some}  \,\, \tilde{\Omega} \Subset \Omega \Big\}. $$
It can be shown that $\mathcal{T} \subset X_0^s(\Omega) \cap L^{\infty}(\Omega)$. Moreover, for every $\phi \in \mathcal{T}$, there exists a constant $\beta \in (0,1)$ such that $\phi \in C^{0,\beta}(\Omega)$. See \cite{Cite37, Cite22, Cite40}.
It is easy to check that for $u\in X_0^s(\Omega)$ and $ \phi \in \mathcal{T}$:
$$
\begin{aligned}
2 C_{N,s}^{-1} \int_{\mathbb{R}^N} u (-\Delta)^{s} \phi \, dx & = 2 C_{N,s}^{-1} \int_{\mathbb{R}^N}  (-\Delta)^{\frac{s}{2}} u (-\Delta)^{\frac{s}{2}} \phi \, dx \\
& = \iint_{D_{\Omega}} \frac{(u(x)-u(y))(\phi(x)-\phi(y))}{|x-y|^{N+2s}} \, dxdy.
\end{aligned}
$$
This equality shows the self-adjointness of $(-\Delta)^s$ in $X_0^s(\Omega)$. Also, one can show that $ (-\Delta)^s : X_0^s(\Omega) \to X^{-s}(\Omega) $ is a continuous strictly monotone operator, where $X^{-s}(\Omega)$ indicates the dual of $X_0^s(\Omega)$. 
\begin{definition}
We say that $u$ is a weak solution to \eqref{Eq1} if:
\begin{itemize}[leftmargin=*]
\item
$u \in L^1_{\mathrm{loc}}(\Omega)$, and for every $K \Subset \Omega$, there exists $C_K>0$ such that $u(x) \geq C_K$ a.e. in $K$ and also $u \equiv 0$ in $\big(\mathbb{R}^N \setminus \Omega \big)$.
\item
Equation \eqref{Eq1} is satisfied in the sense of distributions with the class of test functions $\mathcal{T}$, i.e.
\begin{equation}
\label{Eq3.4}
\int_{\mathbb{R}^N} u (-\Delta)^s \phi \, dx = \int_{\Omega} u^{-q} \phi \, dx + \int_{\Omega}  f h(u) \phi \, dx + \int_{\Omega} \phi \, d\mu, \quad \forall \phi \in \mathcal{T}.
\end{equation}
\end{itemize}
Note that since $\phi$ has compact support in $\Omega$, the first and second terms on the right-hand side of \eqref{Eq3.4} are well-defined by the strict positivity of $u$ on the compact subsets of $\Omega$. Moreover, the last term is well-posed because of test functions belong to $C_c(\Omega)$.
\end{definition}

Concerning the uniqueness, we have another definition to solutions of \eqref{Eq1}. In fact we would like to consider the entropy solution. The motivation of the definition comes from \cite{Cite25}. We will denote
$$
T_k(s) = \begin{cases}
s   & |s| \leq k \\
k \, \mathrm{sign} (s)   & |s| \geq k,
\end{cases}
$$
the usual truncation operator.
\begin{definition}
\label{DEFN}
Let $\mu$ be a measure in $L^1(\Omega)+X^{-s}(\Omega)$. We say that $u$ is an entropy solution to \eqref{Eq1} if:
\begin{itemize}[leftmargin=*]
\item
For every $K \Subset \Omega$, there exists $C_K>0$ such that $u(x) \geq C_K$ in $K$ and also $u \equiv 0$ in $\big(\mathbb{R}^N \setminus \Omega \big)$.
\item
$T_k(u) \in X_0^s(\Omega)$, for every $k$, and $u$ satisfies the following family of inequalities:
\begin{equation}
\label{Eq3.500}
\begin{aligned}
\int_{\{|u-\phi| < k \}} (-\Delta)^{\frac{s}{2}} &  u (-\Delta)^{\frac{s}{2}} (u-\phi) \, dx \leq \int_{\Omega} u^{-q} T_k(u-\phi) \, dx \\
& + \int_{\Omega}  f h(u) T_k(u-\phi) \, dx + \int_{\Omega}T_k(u-\phi) \, d\mu,
\end{aligned}
\end{equation}
for any $k$ and any $\phi \in X_0^s(\Omega) \cap L^{\infty}(\Omega)$. 
\end{itemize}
We will see later that the first and second terms on the right-hand side are well-defined. Moreover, the assumption $\mu \in L^1(\Omega)+X^{-s}(\Omega)$ is for the well-posedness of the measure term, because $T_k(u-\phi) \in X_0^s(\Omega)\cap L^{\infty}(\Omega)$. Notice that $L^1(\Omega)+X^{-s}(\Omega)$ embeds in the dual space of $X_0^s(\Omega)\cap L^{\infty}(\Omega)$.
\end{definition}

\begin{theorem}
\label{Thm1}
Let $ s \in (0,1)$ and $q \in (0,1)$. Also assume that $f$ and $h$ satisfy assumptions \ref{Con1} and \ref{Con2}, respectively. Moreover, $\mu$ is a non-negative bounded Radon measure. Then there is a positive weak solution in $X_0^{s_1,p}(\Omega)$ to problem \eqref{Eq1}, for all $s_1<s$ and for all $p<\frac{N}{N-s}$.
\end{theorem}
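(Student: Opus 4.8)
\emph{Proof strategy.} The plan is to construct $u$ as a limit of solutions of regularized problems, in the spirit of the approximation schemes used for singular and for measure--data equations. For $n\in\mathbb N$ put $f_n=\min\{f,n\}$, $h_n(t)=\min\{h(t),n\}$, and choose $\mu_n\in L^\infty(\Omega)$ with $\mu_n\ge0$, $\|\mu_n\|_{L^1(\Omega)}\le\|\mu\|_{\mathcal M(\Omega)}$ and $\mu_n\rightharpoonup\mu$ weakly-$*$ in $\mathcal M(\Omega)$ (for instance, a mollification of $\mu$). Consider
\begin{equation*}
(-\Delta)^s u_n=\Bigl(u_n^++\tfrac1n\Bigr)^{-q}+f_n(x)\,h_n(u_n^+)+\mu_n\quad\text{in }\Omega,
\end{equation*}
with the exterior condition $u_n\equiv0$ in $\mathbb R^N\setminus\Omega$. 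For fixed $n$ the right--hand side is bounded in $L^\infty(\Omega)$ by $n^q+n^2+\|\mu_n\|_\infty$, uniformly in the argument, so the solution map $v\mapsto(-\Delta)^{-s}\bigl((v^++\tfrac1n)^{-q}+f_nh_n(v^+)+\mu_n\bigr)$ is well defined (Lax--Milgram), sends $L^2(\Omega)$ into a bounded subset of $X_0^s(\Omega)$, and is continuous and compact on $L^2(\Omega)$ via the compact embedding $X_0^s(\Omega)\hookrightarrow L^2(\Omega)$; Schauder's fixed point theorem produces $u_n\in X_0^s(\Omega)$. Since the datum is nonnegative, $0$ is a subsolution, and the zeroth--order nonlinearity $t\mapsto(t^++\tfrac1n)^{-q}+f_nh_n(t^+)$ is non-increasing, the weak comparison principle gives $u_n\ge0$, the strong maximum principle gives $u_n>0$ in $\Omega$, and Stampacchia's estimate gives $u_n\in L^\infty(\Omega)$; thus $u_n$ solves $(-\Delta)^su_n=(u_n+\tfrac1n)^{-q}+f_nh_n(u_n)+\mu_n$ in $\Omega$.

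\emph{Uniform estimates.} Let $\underline w\in X_0^s(\Omega)\cap L^\infty(\Omega)$, $\underline w>0$ in $\Omega$, solve $(-\Delta)^s\underline w=1$ in $\Omega$ with $\underline w=0$ outside; when $s<\tfrac12$ we invoke the convexity of $\Omega$ to build the boundary barrier, which also yields $\underline w(x)\ge c_0\,\delta(x)^s$, $\delta(x):=\mathrm{dist}(x,\partial\Omega)$. If $\lambda>0$ is so small that $\lambda\bigl(\lambda\|\underline w\|_\infty+1\bigr)^q\le1$, then $\lambda\underline w$ is a subsolution of the $n$-th problem for all $n\ge1$, so by comparison $u_n\ge\lambda\underline w$; in particular, for each $K\Subset\Omega$ there is $C_K>0$, independent of $n$, with $u_n\ge C_K$ on $K$. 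Testing the $n$-th equation with $T_k(u_n)\in X_0^s(\Omega)$ and using $(a-b)\bigl(T_k(a)-T_k(b)\bigr)\ge\bigl|T_k(a)-T_k(b)\bigr|^2$,
\begin{equation*}
\|T_k(u_n)\|_{X_0^s(\Omega)}^2\le\int_\Omega\Bigl(u_n+\tfrac1n\Bigr)^{-q}T_k(u_n)\,dx+\int_\Omega f_nh_n(u_n)T_k(u_n)\,dx+k\,\|\mu\|_{\mathcal M(\Omega)}.
\end{equation*}
Splitting $\Omega$ into $\{u_n\ge\eta\}$ and $\{u_n<\eta\}$ for a fixed $\eta\in(0,\underline K)$: on $\{u_n\ge\eta\}$ one uses $(u_n+\tfrac1n)^{-q}\le\eta^{-q}$ and $h_n(u_n)\le h(\eta)$; on $\{u_n<\eta\}$ one uses $(u_n+\tfrac1n)^{-q}T_k(u_n)\le u_n^{1-q}\le\eta^{1-q}$ and, by the growth hypothesis on $h$, $f_nh_n(u_n)T_k(u_n)\le C_1 f\,u_n^{1-\gamma}\le C_1\eta^{1-\gamma}f$ since $\gamma\le1$. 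Hence $\|T_k(u_n)\|_{X_0^s(\Omega)}^2\le Ck$ for $k\ge1$, with $C$ independent of $n$. The fractional Stampacchia--Boccardo--Gallou\"et argument (estimate the level sets $\{u_n\ge k\}$, then decompose the Gagliardo seminorm over the slices $\{j<u_n\le j+1\}$) upgrades this to the uniform bounds $\|u_n\|_{M^{N/(N-2s)}(\Omega)}\le C$ and $\|u_n\|_{X_0^{s_1,p}(\Omega)}\le C$ for every $s_1<s$ and every $p<\tfrac N{N-s}$.

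\emph{Passage to the limit.} By the last bound and the compact embedding $X_0^{s_1,p}(\Omega)\hookrightarrow\hookrightarrow L^1(\Omega)$, there is $u\in X_0^{s_1,p}(\Omega)$ (for all admissible $s_1,p$, by a diagonal subsequence) such that, up to a subsequence, $u_n\rightharpoonup u$ in each $X_0^{s_1,p}(\Omega)$ and $u_n\to u$ in $L^1(\Omega)$ and a.e.\ in $\Omega$. Letting $n\to\infty$ in $u_n\ge\lambda\underline w$ gives $u\ge\lambda\underline w>0$ in $\Omega$, so $u\in L^1_{\mathrm{loc}}(\Omega)$ and $u\ge C_K>0$ on every $K\Subset\Omega$. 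Fix $\phi\in\mathcal T$ with $(-\Delta)^s\phi=\varphi\in C_c^\infty(\Omega)$ and $\mathrm{supp}\,\phi\cup\mathrm{supp}\,\varphi\subset K\Subset\Omega$: then $\int_{\mathbb R^N}u_n(-\Delta)^s\phi\,dx=\int_\Omega u_n\varphi\,dx\to\int_\Omega u\varphi\,dx=\int_{\mathbb R^N}u(-\Delta)^s\phi\,dx$; on $K$ one has $u_n\ge C_K$, so $(u_n+\tfrac1n)^{-q}\to u^{-q}$ a.e.\ with bound $C_K^{-q}$ and $f_nh_n(u_n)\to fh(u)$ a.e.\ with bound $h(C_K)f\in L^1(K)$, hence by dominated convergence the first two terms of \eqref{Eq3.4} pass to the limit; finally $\int_\Omega\phi\,d\mu_n\to\int_\Omega\phi\,d\mu$ because $\mu_n\rightharpoonup\mu$ weakly-$*$ and $\phi\in C_c(\Omega)$. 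Therefore $u$ satisfies \eqref{Eq3.4} and is a positive weak solution belonging to $X_0^{s_1,p}(\Omega)$ for all $s_1<s$ and all $p<\tfrac N{N-s}$.

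\emph{Main difficulty.} The substance of the proof is in the uniform estimates: producing the $n$-independent lower barrier $u_n\ge\lambda\underline w$ (and building it on a convex domain when $s<\tfrac12$), and converting $\|T_k(u_n)\|_{X_0^s}^2\lesssim k$ into uniform $X_0^{s_1,p}$ bounds on the whole admissible range, i.e.\ the nonlocal analogue of the Boccardo--Gallou\"et regularity for measure data. The only structural inputs that keep the constants independent of $n$ are the elementary inequalities $sq<1$ (automatic, since $s,q\in(0,1)$) and $\gamma\le1$, which tame the singular terms $u^{-q}$ and $f\,h(u)$ near $\partial\Omega$.
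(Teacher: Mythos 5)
Your argument is correct and arrives at the same a priori estimates and the same limit passage as the paper, but it takes a genuinely different route at two points. First, you regularize the singular term, replacing $u^{-q}$ by $(u^{+}+\frac1n)^{-q}$, so each approximating problem has a right-hand side bounded by $n^{q}+n^{2}+\|\mu_n\|_{\infty}$ and is solved by Lax--Milgram plus Schauder; the paper instead keeps the singular term in the approximating problems \eqref{Eq6} and leans on Proposition \ref{Pro1} (the Giacomoni--Mukherjee--Sreenadh existence and uniqueness result for \eqref{Eq4}), which is precisely where the fractional Hardy--Sobolev inequality, and hence the convexity of $\Omega$ for $s<\frac12$, enters the paper. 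Your regularization sidesteps that machinery at the existence stage; note in passing that the place you invoke convexity (the barrier $\underline w\ge c_0\delta^s$) is not where it is actually needed, since the torsion function of a smooth bounded domain satisfies this bound for every $s\in(0,1)$. Second, your uniform local lower bound comes from the explicit subsolution $\lambda\underline w$ with $(-\Delta)^s\underline w=1$ and the smallness condition $\lambda(\lambda\|\underline w\|_\infty+1)^q\le 1$, which is correct and shorter than the paper's Proposition \ref{Pro1.5}, where $w_n$ is compared with the increasing sequence $v_n$ solving \eqref{Eq9.1}; your version has the added benefit of an $n$-independent global bound $u_n\ge\lambda\underline w$. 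The truncation estimate $\|T_k(u_n)\|_{X_0^s(\Omega)}^2\le Ck$ matches Proposition \ref{Pro2} (your splitting at a level $\eta<\underline K$ versus the paper's direct estimate \eqref{Eq11}--\eqref{Eq13} are interchangeable). The one step you defer to a ``standard'' argument that the paper actually carries out in detail is the upgrade to uniform $X_0^{s_1,p}(\Omega)$ bounds: the paper (Proposition \ref{Pro3}) does this by showing $(-\Delta)^{\frac s2}w_n$ is bounded in the Marcinkiewicz space $M^{\frac{N}{N-s}}(\Omega)$ and then invoking Stein's lifting theorem, rather than by slicing the Gagliardo seminorm over the level sets $\{j<u_n\le j+1\}$; both yield the stated range of $(s_1,p)$, but the slicing argument is more delicate in the nonlocal setting because of the interaction between distant slices, so if you keep that route you should supply or cite the nonlocal Boccardo--Gallou\"et lemma explicitly.
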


\section{Proof of Theorem \ref{Thm1}}
\label{Section3}

First of all, we consider the following auxiliary problem: 
\begin{equation}
\label{Eq4}
\begin{cases}
(-\Delta )^s u =  u^{-q} +  g &   \mathrm{in} \,\, \Omega, \\ u>0 &   \mathrm{in} \,\, \Omega, \\ u=0 &   \mathrm{in} \,\, \big(\mathbb{R}^N \setminus \Omega \big),
\end{cases}
\end{equation}
where $ 0 \leq g \in L^{\infty}(\Omega)$. This problem can be considered as a special case of the Problem $(Q^s)$ in \cite[Theorem 2.9]{Cite16}. For the existence and uniqueness of the solution to problem \eqref{Eq4} we have a modified version of \cite[Theorem 2.9]{Cite16} in the following Proposition.

Before we get into the Proposition, we need to define the set $\mathcal{C}$ as the set of functions $v \in L^{\infty}(\Omega)$ such that there exists positive constants $k_1$ and $k_2$ such that:
\begin{equation}
\label{Eq4.5}
k_1\delta^s(x) \leq v(x) \leq k_2 \delta^s(x).
\end{equation}
Here $ \delta(x)= \mathrm{dist}(x,\partial \Omega)$, $x \in \Omega$, is the distance function from the boundary $\partial \Omega$.
\begin{proposition}
\label{Pro1}
If $ g \in L^{\infty}(\Omega) $, $g \geq 0$, $s \in (0,1)$, and $ q \in (0,1)$, then there exists a unique positive weak energy solution to \eqref{Eq4} in $ X_0^s(\Omega) \cap \mathcal{C} \cap C^{0,s}(\mathbb{R}^N)$. 
\end{proposition}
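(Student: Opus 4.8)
The plan is to construct the solution by approximating the singular term $u^{-q}$ with a bounded nonlinearity, obtaining solutions of the regularized problems via standard variational or monotone-operator methods, and then passing to the limit with the aid of uniform a priori bounds — both in $X_0^s(\Omega)$ and, crucially, in the weighted space $\mathcal{C}$. Concretely, for $n \in \mathbb{N}$ I would consider
$$
(-\Delta)^s u_n = (u_n + \tfrac1n)^{-q} + g \quad \text{in } \Omega, \qquad u_n = 0 \text{ in } \mathbb{R}^N \setminus \Omega.
$$
The right-hand side is now bounded (it lies in $L^\infty(\Omega)$, between $g$ and $n^q + \|g\|_\infty$), so existence and uniqueness of $u_n \in X_0^s(\Omega)$ follows from the Lax–Milgram theorem applied to the bilinear form associated with $(-\Delta)^s$, together with the fact that $v \mapsto (v + \tfrac1n)^{-q}$ is non-increasing, which gives monotonicity and hence uniqueness; alternatively one minimizes the strictly convex energy functional. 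Standard regularity for the fractional Dirichlet problem with bounded data yields $u_n \in L^\infty(\Omega) \cap C^{0,s}(\mathbb{R}^N)$, and the weak maximum principle gives $u_n \geq u_1 > 0$ on every compact subset, as well as the monotonicity $u_n \leq u_{n+1}$ (since the approximating right-hand side increases in $n$ for fixed argument and the operator is monotone).

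Next I would establish the two-sided bound \eqref{Eq4.5} uniformly in $n$. The upper bound $u_n(x) \leq k_2 \delta^s(x)$ follows by comparison with the solution of $(-\Delta)^s w = C$ in $\Omega$, $w = 0$ outside, once we control $(u_n + \tfrac1n)^{-q}$ from above — and here the point is that $q < 1$, so even the purely singular sub-problem $(-\Delta)^s z = z^{-q}$ has a solution behaving like $\delta^s$; one shows $u_n \leq z + (\text{bounded contribution of } g)$, and both pieces are $O(\delta^s)$ using the known boundary behavior of the fractional Green operator applied to $L^\infty$ data. For the lower bound $u_n(x) \geq k_1 \delta^s(x)$, I would compare $u_n$ with the solution of $(-\Delta)^s \psi = \varepsilon$ in $\Omega$ for a small fixed $\varepsilon$ (noting $(u_n + \tfrac1n)^{-q} + g \geq (u_1 + 1)^{-q}$ once $n \geq 1$ and $u_n \geq u_1$, which is bounded below near the boundary after a further comparison argument, or more simply using $g \geq 0$ and the fact that on $\{u_n \leq 1\}$ one has $(u_n+\tfrac1n)^{-q} \geq 2^{-q}$); such $\psi$ satisfies $\psi \geq c\,\delta^s$ by the Hopf-type lemma for the fractional Laplacian. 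Combining, $u_n \in \mathcal{C}$ with constants independent of $n$.

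With the uniform $\mathcal{C}$-bounds in hand, the singular term is controlled: on any $K \Subset \Omega$, $(u_n + \tfrac1n)^{-q} \leq (k_1 \delta^s)^{-q}|_K \in L^\infty(K)$, and globally $(u_n+\tfrac1n)^{-q} \leq (k_1\delta^s)^{-q} \in L^1(\Omega)$ because $sq < s < 1 \leq$ (the critical exponent for integrability of $\delta^{-sq}$ near $\partial\Omega$, which holds precisely since $sq < 1$). Testing the equation for $u_n$ with $u_n$ itself and using this $L^1$ control plus the embedding \eqref{Eq3.3} gives a uniform bound on $\|u_n\|_{X_0^s(\Omega)}$; testing with $(u_n - u_m)$ and exploiting monotonicity of the nonlinearity gives that $(u_n)$ is Cauchy, so $u_n \to u$ strongly in $X_0^s(\Omega)$, and $u$ inherits the bounds \eqref{Eq4.5} and the Hölder regularity $C^{0,s}(\mathbb{R}^N)$ (the latter by the uniform $C^{0,s}$ estimate, which holds since the data $(u_n+\tfrac1n)^{-q}+g$, while not uniformly bounded in $L^\infty(\Omega)$, is uniformly bounded in $L^\infty$ away from $\partial\Omega$ and the boundary behavior is already pinned down by the comparison with $\delta^s$). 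Passing to the limit in the weak formulation — using dominated convergence for the singular term on compact subsets, where $(u_n+\tfrac1n)^{-q}\to u^{-q}$ monotonically and is dominated — yields that $u$ is a weak energy solution; since $u \in \mathcal{C}$, it is strictly positive on compacts. Uniqueness follows from the strict monotonicity of $(-\Delta)^s$ combined with the non-increasing character of $t \mapsto t^{-q}$: if $u, \tilde u$ are two solutions, test the difference of the equations with $(u - \tilde u)^+ \in X_0^s(\Omega)$ (admissible because $T_k(u),T_k(\tilde u) \in X_0^s$ and both are $O(\delta^s)$ so the difference is genuinely in $X_0^s$) to get $\|(u-\tilde u)^+\|_{X_0^s}^2 \leq \int (u^{-q} - \tilde u^{-q})(u-\tilde u)^+ \leq 0$.

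I expect the main obstacle to be the uniform lower bound $u_n \geq k_1 \delta^s$ together with the attendant uniform $C^{0,s}(\mathbb{R}^N)$ regularity: the right-hand sides $(u_n + \tfrac1n)^{-q} + g$ blow up in $L^\infty(\Omega)$ as $n \to \infty$, so one cannot simply invoke a black-box boundary regularity estimate, and the argument must instead bootstrap from the comparison functions — first establishing $u_n \gtrsim \delta^s$, then feeding $(u_n)^{-q} \lesssim \delta^{-sq}$ (integrable, since $sq<1$) back into the Green-operator representation to upgrade to the sharp $\delta^s$ upper bound and the Hölder estimate. This is exactly the delicate interplay that \cite{Cite16} carries out for the stationary problem $(Q^s)$, and the "modified version" here amounts to checking it goes through with the extra bounded term $g$, which only helps.
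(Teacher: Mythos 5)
The paper does not actually carry out the construction you describe: it obtains Proposition \ref{Pro1} by invoking \cite[Theorem 2.9]{Cite16} for existence and \cite[Lemma 3.1]{Cite16} (strict monotonicity of $(-\Delta)^s u - u^{-q}$) for uniqueness, and the only piece of mathematics it supplies itself is the verification \eqref{Eq4.700} that the singular term in the weak formulation is well defined. Your proposal reconstructs the approximation scheme that underlies the cited theorem --- regularize with $(u_n+\tfrac1n)^{-q}$, get monotone $u_n$, prove the two-sided barrier bound \eqref{Eq4.5} uniformly in $n$ using that $q<1$ forces $\delta^s$ behaviour, and pass to the limit --- and that route is sound in outline and is essentially the one the reference follows. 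Two small repairs: Lax--Milgram alone does not produce $u_n$ since the right-hand side depends on $u_n$ (your alternative, minimizing the convex coercive functional $\tfrac12\|u\|_{X_0^s}^2-\int_\Omega G_n(u^+)-\int_\Omega gu$ with $G_n$ concave, is the correct one); and the Cauchy estimate for $u_n-u_m$ needs an extra step because the regularization parameters $\tfrac1n$ and $\tfrac1m$ differ, so the cross term $\int_\Omega\big[(u_m+\tfrac1n)^{-q}-(u_m+\tfrac1m)^{-q}\big](u_n-u_m)\,dx$ is not killed by monotonicity and must be shown to vanish (split near and away from $\partial\Omega$ and use $\delta^{-sq}\in L^1(\Omega)$, or simply use monotone plus weak convergence instead).

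The genuine gap is the well-posedness of the energy formulation \eqref{Eq4.600} itself. A weak energy solution must satisfy $\int_{\mathbb{R}^N}(-\Delta)^{\frac s2}u\,(-\Delta)^{\frac s2}\phi\,dx=\int_\Omega u^{-q}\phi\,dx+\int_\Omega g\phi\,dx$ for \emph{every} $\phi\in X_0^s(\Omega)$, and such $\phi$ need not be bounded. Your integrability control, $u^{-q}\le (k_1\delta^s)^{-q}\in L^1(\Omega)$ since $sq<1$, only makes $\int_\Omega u^{-q}\phi\,dx$ finite for bounded test functions (which is all your uniqueness argument and your limit passage use). To define the right-hand side for general $\phi\in X_0^s(\Omega)$ one needs the estimate \eqref{Eq4.700}, i.e. H\"older plus the fractional Hardy--Sobolev inequality $\int_\Omega \phi^2\delta^{-2sq}\,dx\lesssim\|\phi\|_{X_0^{sq}(\Omega)}^2$ together with the embedding $X_0^s\hookrightarrow X_0^{sq}$. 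This is exactly where the hypothesis that $\Omega$ is convex for $s\in(0,\tfrac12)$ enters (via \cite{Cite66}; for $s\ge\tfrac12$ boundary regularity suffices), a hypothesis announced in the abstract and introduction but absent from your argument. Without this step the constructed limit $u$ has not been shown to be a weak energy solution in the sense the Proposition asserts, only a distributional one against bounded test functions.
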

The notion of the solution to \eqref{Eq4} is as follows. The function $u \in X_0^s(\Omega)$ is a weak energy solution to the above problem if:
\begin{itemize}[leftmargin=*]
\item
For every $K \Subset \Omega$, there exists $C_K>0$ such that $u(x) \geq C_K$ in $K$ and also $u \equiv 0$ in $\big(\mathbb{R}^N \setminus \Omega \big)$.
\item
For every $\phi \in X_0^s(\Omega)$, we have:
\begin{equation}
\label{Eq4.600}
\int_{\mathbb{R}^N} (-\Delta)^{\frac{s}{2}} u (-\Delta)^{\frac{s}{2}} \phi  \, dx =  \int_{\Omega} u^{-q} \phi \, dx + \int_{\Omega} g \phi \, dx.
\end{equation}
\end{itemize}
Note that the first term on the right-hand side of the above equality is well-defined by \eqref{Eq4.5} and applying the H\"older inequality and the fractional Hardy-Sobolev inequality (and convexity of $\Omega$ only for $0<s<\frac{1}{2}$), \cite[Theorem 1.1]{Cite66}. More preciesly, since $u$ behaves like $\delta^s$, we have the following estimate for every $\phi \in X_0^s(\Omega)$.
\begin{equation}
\label{Eq4.700}
\begin{aligned}
\int_{\Omega} u^{-q} \phi \, dx \leq k_1^{-q} \int_{\Omega} \frac{\phi}{\delta^{sq}} \, dx & \leq C_1 \Bigg(\int_{\Omega} \frac{\phi^2}{\delta^{2sq}} \, dx\Bigg)^{\frac{1}{2}} \\ 
& \leq C_2 \|\phi\|_{X_0^{sq}(\Omega)} \leq C_3 \|\phi\|_{X_0^{s}(\Omega)}. 
\end{aligned}
\end{equation}
Here in the last inequality, we used the continuous embedding of $X_0^{s_2}(\Omega)$ into $X_0^{s_1}(\Omega)$, for any $s_1<s_2$. See for example \cite[Proposition 2.1]{Cite3}. 

For general domains with some boundary regularity, the fractional Hardy-Sobolev inequality is proved for $s \in[ \frac{1}{2},1)$. See \cite{Cite66.66, Cite55.555, Cite55.5555}. But in \cite{Cite66}, the authors proved the fractional Hardy-Sobolev inequality for any $s \in (0,1)$, by using the fact that the domain is a convex set and its distance from the boundary is a superharmonic function.

It is worth emphasizing that the uniqueness of the weak energy solution to \eqref{Eq4} follows from the strict monotonicity of the operator $(-\Delta)^{s} u -u^{-q}$, for example see \cite[Lemma 3.1]{Cite16}.

By considering the well-posedness of the first term on the right-hand side of \eqref{Eq4.600}, the well-posedness of the first term on the right-hand side of \eqref{Eq3.500} will be clear after the construction of an entropy solution in section \ref{Section4}. The well-posedness of the second term on the right-hand side of \eqref{Eq3.500} will also be apparent by using assumption \ref{Con2} and the same reasoning.

Now, for every $v \in L^2(\Omega)$, define $\Phi(v)=w$ where $w$ is the solution to the following problem for any fixed $n$ (existence and uniqueness is guaranteed by the above Proposition):
\begin{equation}
\label{Eq5}
\begin{cases}
(-\Delta )^s w = w^{-q} +f_n(x)h_n(|v|+\frac{1}{n})+\mu_n &   \mathrm{in} \,\, \Omega,\\ w>0 &   \mathrm{in} \,\, \Omega, \\ w=0 &   \mathrm{in} \,\,  \big(\mathbb{R}^N \setminus \Omega \big). 
\end{cases}
\end{equation}
Here $ f_n = T_n(f) $ and $ h_n = T_n(h) $ are the truncations at level $n$ and $\{\mu_n\}$ is a sequence of smooth non-negative functions bounded in $L^1(\Omega)$ such that converges to $\mu$ in the weak-star sense of measures, i.e. 
$$ \int_{\Omega} \phi \mu_n \, dx \to \int_{\Omega} \phi  \, d\mu, \qquad \forall \phi \in C_c(\Omega). $$
If we show that $\Phi : L^2(\Omega) \to L^2(\Omega) $ has a fixed point $w_n$, then $w_n \in L^2(\Omega)$ will be the weak solution to the following problem in $X_0^s(\Omega) \cap \mathcal{C} \cap C^{0,s}(\mathbb{R}^N)$.
\begin{equation}
\label{Eq6}
\begin{cases}
(-\Delta )^s w_n = w_n^{-q} +f_n(x)h_n(w_n+\frac{1}{n})+\mu_n &   \mathrm{in} \,\, \Omega,\\ w_n>0 &   \mathrm{in} \,\, \Omega, \\ w_n=0 &   \mathrm{in} \,\,  \big(\mathbb{R}^N \setminus \Omega \big). 
\end{cases}
\end{equation}
For this purpose, we apply the Schauder’s fixed-point theorem. We need to prove that $\Phi$ is continuous, compact and there exists a bounded convex subset of $L^2(\Omega)$ which is invariant under $\Phi$. 

For continuity let $v_k \to v$ in $L^2(\Omega)$. From assumption \ref{Con2} and the dominated convergence theorem it is obvious that for each $n$:
$$ \Big\| (h_n(|v_k|+\frac{1}{n})f_n + \mu_n)-(h_n(|v|+\frac{1}{n})f_n+\mu_n) \Big\|_{L^2(\Omega)} \to 0, \qquad k \to \infty. $$
Now, from the uniqueness of the weak solution to \eqref{Eq4}, we conclude $\Phi(v_k) \to \Phi(v)$. 

For compactness, we argue as follows. For $v \in L^2(\Omega)$, let $w$ be the solution to \eqref{Eq5}. If $\lambda_1^s(\Omega)$ is the first eigenvalue of $(-\Delta)^s$ in $X_0^s(\Omega)$, \cite[Proposition 9]{Cite35}, then we have:
\begin{equation}
\label{Eq7}
\lambda_1^s(\Omega) \int_{\Omega} w^2 \, dx \leq \int_{\mathbb{R}^N} | (-\Delta)^{\frac{s}{2}} w|^2 \, dx.
\end{equation}
Testing \eqref{Eq5} with $\phi=w$, we have:
\begin{equation}
\label{Eq8}
\int_{\mathbb{R}^N} | (-\Delta)^{\frac{s}{2}} w|^2 \, dx = \int_{\Omega} w^{1-q} \, dx + \int_{\Omega} f_n h_n(|v|+\frac{1}{n}) w \, dx + \int_{\Omega} w \mu_n \, dx.
\end{equation}
By the growth condition on $h$, assumption \ref{Con2}, we have:
\begin{align}
\nonumber 
\int_{\Omega} f_n h_n(|v|+\frac{1}{n}) w \, dx & \leq C_1 \int_{\{|v|+\frac{1}{n} < \underline{K}\}} \frac{f_n w}{(|v|+\frac{1}{n})^{\gamma}} + \max_{[\underline{K},\overline{K}]} h \int_{\{\underline{K} \leq |v|+\frac{1}{n} \leq \overline{K}\}} f_n w \, dx \\ \nonumber
& \quad + C_2 \int_{\{|v|+\frac{1}{n} > \overline{K}\}} \frac{f_n w}{(|v|+\frac{1}{n})^{\theta}} \, dx \\ \nonumber
& \leq C_1 n^{1+\gamma} \int_{\{|v|+\frac{1}{n} < \underline{K}\}} |w| \, dx + n \max_{[\underline{K},\overline{K}]} h \int_{\{\underline{K} \leq |v|+\frac{1}{n} \leq \overline{K}\}} |w| \, dx \\ \nonumber
& \quad + C_2 n^{1+\theta} \int_{\{|v|+\frac{1}{n} > \overline{K}\}} |w| \, dx \\ \nonumber
& \leq \Big( C_1 n^{1+\gamma} + n \max_{[\underline{K},\overline{K}]} h + C_2 n^{1+\theta} \Big) \int_{\Omega} |w| \, dx \\
& \leq C_3 \Big( \int_{\Omega} |w|^2 \, dx \Big)^{\frac{1}{2}}, \label{Eq9}
\end{align}
where in the last inequality we have used the H\"older inequality. Once more using the H\"older inequality gives $ \int_{\Omega} \mu_n w \, dx \leq C_4 \Big( \int_{\Omega} |w|^2 \, dx \Big)^{\frac{1}{2}}$ and
$ \int_{\Omega} w^{1-q} \, dx \leq C_5 \Big( \int_{\Omega} |w|^2 \, dx \Big)^{\frac{1-q}{2}}$ for some $C_4>0$ and $C_5>0$. Thus combining recent two inequalities with \eqref{Eq7}, \eqref{Eq8} and \eqref{Eq9} we obtain:
$$ 
\lambda_1^s(\Omega) \int_{\Omega} |w|^2 \, dx \leq C_6 \Big( \int_{\Omega} |w|^2 \, dx \Big)^{\frac{1}{2}} + C_7 \Big( \int_{\Omega} |w|^2 \, dx \Big)^{\frac{1-q}{2}},
$$
which implies that $\Phi(L^2(\Omega))$ is contained in a ball of finite radius in $L^2(\Omega)$. Therefore this ball is invariant under $\Phi$. Moreover, we have $ \int_{\mathbb{R}^N} | (-\Delta)^{\frac{s}{2}} \Phi(v)|^2 \, dx=\int_{\mathbb{R}^N} | (-\Delta)^{\frac{s}{2}} w|^2 \, dx  \leq C_8 $, which means that $\Phi(L^2(\Omega))$ is relatively compact in $L^2(\Omega)$ by the compactness of the embedding \eqref{Eq3.3}.

\begin{proposition}
\label{Pro1.5}
For every $K \Subset \Omega$, there exists $C_K>0$ such that $\{w_n\}$, the solution to \eqref{Eq6}, satisfies $w_n(x) \geq C_K$ a.e. in $K$, for each $n$.
\end{proposition}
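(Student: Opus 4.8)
The plan is to bound $w_n$ from below by the unique solution $\underline{u}$ of the purely singular problem, i.e.\ problem \eqref{Eq4} with $g \equiv 0$, which by Proposition \ref{Pro1} exists in $X_0^s(\Omega) \cap \mathcal{C} \cap C^{0,s}(\mathbb{R}^N)$ and, crucially, does \emph{not} depend on $n$. Since $\underline{u} \in \mathcal{C}$, there is $k_1>0$ with $\underline{u}(x) \geq k_1 \delta^s(x)$ in $\Omega$, so on any $K \Subset \Omega$ one has $\underline{u}(x) \geq k_1 \big(\mathrm{dist}(K,\partial\Omega)\big)^s =: C_K > 0$. Hence it suffices to prove $w_n \geq \underline{u}$ a.e.\ in $\Omega$ for every $n$.

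To obtain this comparison, observe that the datum in \eqref{Eq6}, namely $g_n := f_n h_n(w_n + \tfrac1n) + \mu_n$, is non-negative, so $w_n$ is a supersolution of the equation solved by $\underline{u}$. Subtracting the weak formulation \eqref{Eq4.600} for $w_n$ from that for $\underline{u}$ and testing both with $\phi = (\underline{u} - w_n)^+ \in X_0^s(\Omega)$ yields
\begin{equation*}
\int_{\mathbb{R}^N} (-\Delta)^{\frac{s}{2}}(\underline{u} - w_n)\,(-\Delta)^{\frac{s}{2}}(\underline{u} - w_n)^+ \, dx = \int_{\Omega} \big(\underline{u}^{-q} - w_n^{-q}\big)(\underline{u} - w_n)^+ \, dx - \int_{\Omega} g_n (\underline{u} - w_n)^+ \, dx .
\end{equation*}
The test function is admissible because $v \mapsto v^+$ maps $X_0^s(\Omega)$ into itself and, on the set $\{\underline{u} > w_n\}$, the integrands $\underline{u}^{-q}(\underline{u}-w_n)^+$ and $w_n^{-q}(\underline{u}-w_n)^+$ are dominated by a constant multiple of $\delta^{s(1-q)}$ (using $\underline{u}, w_n \in \mathcal{C}$), hence are integrable. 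By the elementary pointwise inequality $(a-b)(a^+-b^+) \geq (a^+-b^+)^2$ and the Gagliardo representation of the bilinear form, the left-hand side is $\geq \tfrac{C_{N,s}}{2}\|(\underline{u}-w_n)^+\|_{X_0^s(\Omega)}^2 \geq 0$; on the right-hand side, $\underline{u}^{-q} - w_n^{-q} \leq 0$ on $\{\underline{u}>w_n\}$ since $t\mapsto t^{-q}$ is decreasing, and $g_n \geq 0$, so the right-hand side is $\leq 0$. Therefore $(\underline{u}-w_n)^+ \equiv 0$, i.e.\ $w_n \geq \underline{u}$ a.e.\ in $\Omega$, and the claimed uniform lower bound on $K$ follows.

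The main obstacle I anticipate is the functional-analytic bookkeeping: verifying that $(\underline{u}-w_n)^+$ is a legitimate test function in \eqref{Eq4.600} and that all the singular integrals in the subtracted equations converge — which rests on the fact that both $\underline{u}$ and $w_n$ behave like $\delta^s$ near $\partial\Omega$, together with the estimate \eqref{Eq4.700}. Once this is granted, the comparison is a one-line monotonicity argument, and one only needs to take care that the $n$-dependent constants $k_1, k_2$ for $w_n$ in \eqref{Eq4.5} are used purely qualitatively, so that the final constant $C_K$ depends on $\underline{u}$, hence on $K$, alone.
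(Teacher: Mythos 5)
Your proof is correct, but it takes a genuinely different route from the paper's. The paper introduces the auxiliary problems $(-\Delta)^s v_n = f_n h_n(v_n+\frac{1}{n})$ (with neither the singular term nor $\mu_n$), invokes results from the literature to get that $\{v_n\}$ is nondecreasing and that $v_1$ is bounded below on compact subsets, and then proves $w_n \geq v_n$ by testing the difference of the two weak formulations with $(w_n-v_n)^-$, so the lower bound for $w_n$ is inherited from $v_1$. You instead compare $w_n$ with the $n$-independent solution $\underline{u}$ of the purely singular problem ($g \equiv 0$ in \eqref{Eq4}), whose membership in $\mathcal{C}$ guaranteed by Proposition \ref{Pro1} immediately yields $\underline{u} \geq k_1 \delta^s$ and hence the uniform bound on $K$. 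Your comparison treats $f_n h_n(w_n+\frac{1}{n})+\mu_n$ as a single non-negative $L^{\infty}$ datum, so it needs neither the monotonicity of $h$ nor the monotone construction $v_n \leq v_{n+1}$ with its external lemmas for the positivity of $v_1$; everything is self-contained given Proposition \ref{Pro1}. The price is the admissibility of $(\underline{u}-w_n)^+$ as a test function in \eqref{Eq4.600}, which you justify correctly via the $\delta^s$ behaviour of both functions (the analogous issue, for $(w_n-v_n)^-$, is present in the paper's argument as well), and the elementary inequality $(a-b)(a^+-b^+) \geq (a^+-b^+)^2$ you use for the sign of the left-hand side is valid. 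Both arguments are sound; yours is arguably shorter and produces the stronger, $n$-uniform barrier $w_n \geq \underline{u}$.
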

\begin{proof}
Let us consider the following problem:
\begin{equation}
\label{Eq9.1}
\begin{cases}
(-\Delta )^s v_n = f_n(x)h_n(v_n+\frac{1}{n}) &   \mathrm{in} \,\, \Omega,\\ v_n>0 &   \mathrm{in} \,\, \Omega, \\ v_n=0 &   \mathrm{in} \,\,  \big(\mathbb{R}^N \setminus \Omega \big). 
\end{cases}
\end{equation}
The existence of the weak solution $v_n$ follows from a similar proof for \eqref{Eq6}. In the same way of the proofs of \cite[Lemma 2.4]{Cite13} and \cite[Lemma 3.2]{Cite22} we can show that $v_n \leq v_{n+1}$ a.e. in $\Omega$ and also for each $K \Subset \Omega$, there exists $C_K>0$ such that $v_1(x) \geq C_K$ a.e. in $K$. 

Now by subtracting the weak formulation of \eqref{Eq9.1} from the weak formulation of \eqref{Eq6} we obtain:
$$ 
\begin{aligned}
\int_{\mathbb{R}^N} (-\Delta)^{\frac{s}{2}} (w_n-v_n) (-\Delta)^{\frac{s}{2}} \phi  \, dx & =\int_{\Omega} f_n \Big[ h_n(w_n+\frac{1}{n})-h_n(v_n+\frac{1}{n}) \Big] \phi \, dx \\
& \quad +  \int_{\Omega} w_n^{-q} \phi \, dx + \int_{\Omega} \mu_n \phi \, dx .
\end{aligned}
$$
Using $\phi=(w_n-v_n)^-$ as a test function and noting that
$$ \int_{\mathbb{R}^N} (-\Delta)^{\frac{s}{2}} (w_n-v_n) (-\Delta)^{\frac{s}{2}} (w_n-v_n)^-  \, dx \leq - \int_{\mathbb{R}^N} |(-\Delta)^{\frac{s}{2}} (w_n-v_n)^-|^2  \, dx $$
see \cite[Proposition 4]{Cite40}, we deduce:
$$ 
\begin{aligned}
-\int_{\mathbb{R}^N} |(-\Delta)^{\frac{s}{2}} (w_n-v_n)^-|^2  \, dx  & \geq  -\int_{\{w_n < v_n\}} f_n \Big[ h_n(w_n+\frac{1}{n})-h_n(v_n+\frac{1}{n}) \Big] (w_n-v_n) \, dx \\
& - \int_{\{w_n < v_n\}} w_n^{-q} (w_n-v_n) \, dx - \int_{\{w_n < v_n\}} \mu_n (w_n-v_n) \, dx.
\end{aligned}
$$
The right-hand side is non-negative since $w_n$ and $\mu_n $ are non-negative functions, and $h_n$ is a non-increasing function. Therefore, we conclude that $(w_n-v_n)^-=0$ or $w_n \geq v_n$ a.e. in $\Omega$. Thus for each $K \Subset \Omega$, there exists $C_K>0$ such that, $w_n \geq v_n \geq v_1 \geq C_{K}>0$ a.e. in $K$.
\end{proof}

\begin{proposition}
\label{Pro2}
For any $k\geq1$, $\{T_k(w_n)\}_{n=1}^{\infty}$ is bounded in $X_0^s(\Omega)$. Moreover, $\|T_k(w_n)\|_{X_0^s(\Omega)}^2=\mathrm{O}(k)$.
\end{proposition}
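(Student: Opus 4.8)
The plan is to test the weak formulation of \eqref{Eq6} with $\phi=T_k(w_n)$ and estimate the two sides separately. Since $w_n\in X_0^s(\Omega)$ and $T_k$ is $1$-Lipschitz with $T_k(0)=0$, we have $T_k(w_n)\in X_0^s(\Omega)\cap L^\infty(\Omega)$, so this is an admissible test function (the weak formulation of \eqref{Eq6} is of the form \eqref{Eq4.600} with $g=f_nh_n(w_n+\tfrac1n)+\mu_n$). For the left-hand side the point is the truncation inequality
$$
\int_{\mathbb{R}^N} (-\Delta)^{\frac{s}{2}} w_n\,(-\Delta)^{\frac{s}{2}} T_k(w_n)\,dx \;\geq\; \int_{\mathbb{R}^N} \big| (-\Delta)^{\frac{s}{2}} T_k(w_n) \big|^2\,dx \;=\; \tfrac{1}{2}C_{N,s}\,\|T_k(w_n)\|_{X_0^s(\Omega)}^2 ,
$$
which follows by writing both sides as Gagliardo double integrals and using that $|T_k(a)-T_k(b)|\leq|a-b|$ with $T_k$ monotone (the same type of argument as in \cite[Proposition 4]{Cite40}). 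It then suffices to bound each of the three terms on the right-hand side by $\mathrm{O}(k)$, with constants independent of $n$.

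For the singular term I would use the pointwise bound $w_n^{-q}T_k(w_n)\leq k^{1-q}$: on $\{w_n\leq k\}$ it equals $w_n^{1-q}\leq k^{1-q}$, while on $\{w_n>k\}$ it is at most $k\,w_n^{-q}<k^{1-q}$; hence $\int_\Omega w_n^{-q}T_k(w_n)\,dx\leq k^{1-q}|\Omega|$. For $\int_\Omega f_n h_n(w_n+\tfrac1n)T_k(w_n)\,dx$ I would split $\Omega$ according to whether $w_n+\tfrac1n<\underline{K}$, $\underline{K}\leq w_n+\tfrac1n\leq\overline{K}$, or $w_n+\tfrac1n>\overline{K}$. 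Using $h_n\leq h$ together with assumption \ref{Con2}: on the first set $h(w_n+\tfrac1n)T_k(w_n)\leq C_1(w_n+\tfrac1n)^{-\gamma}(w_n+\tfrac1n)=C_1(w_n+\tfrac1n)^{1-\gamma}\leq C_1\underline{K}^{1-\gamma}$, since $T_k(w_n)\leq w_n\leq w_n+\tfrac1n$ and $\gamma\in(0,1]$; on the middle set $h$ is bounded by $\max_{[\underline{K},\overline{K}]}h$ and $T_k(w_n)\leq w_n\leq\overline{K}$; on the last set $h(w_n+\tfrac1n)\leq C_2\overline{K}^{-\theta}$ and $T_k(w_n)\leq k$. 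Thus the integrand is $\leq C(1+k)\,f$ pointwise, and since $f\in L^1(\Omega)$ with $f_n\leq f$, this integral is $\mathrm{O}(k)$. Finally $\int_\Omega \mu_n T_k(w_n)\,dx\leq k\int_\Omega\mu_n\,dx\leq Ck$ because $\{\mu_n\}$ is bounded in $L^1(\Omega)$.

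Putting the estimates together gives $\tfrac12 C_{N,s}\|T_k(w_n)\|_{X_0^s(\Omega)}^2\leq Ck$ with $C$ independent of $k\geq1$ and of $n$ (using $k^{1-q}\leq k$ and $1\leq k$), which yields both the uniform boundedness of $\{T_k(w_n)\}_n$ in $X_0^s(\Omega)$ and the bound $\|T_k(w_n)\|_{X_0^s(\Omega)}^2=\mathrm{O}(k)$. The main obstacle is controlling the singular terms $u^{-q}$ and $fh(u)$ against $T_k(w_n)$: both are handled only because $T_k(w_n)$ vanishes linearly in $w_n$ where $w_n$ is small, which absorbs the singularity precisely when $q<1$ and $\gamma\leq1$ — this is exactly where those hypotheses enter. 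The lower bound on the left-hand side via the fractional truncation inequality is standard but must be invoked carefully; everything else reduces to Hölder's inequality and the uniform $L^1$ bound on the $\mu_n$.
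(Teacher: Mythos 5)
Your proposal is correct and follows essentially the same route as the paper: test the weak formulation of \eqref{Eq6} with $T_k(w_n)$, lower-bound the left-hand side by $\|T_k(w_n)\|_{X_0^s(\Omega)}^2$ (up to the normalization constant) via the fractional truncation inequality, and split the $f_nh_n$ term according to the three growth regimes of $h$ in assumption \ref{Con2}. The one genuine difference is your treatment of the singular term: you use the pointwise bound $w_n^{-q}T_k(w_n)\leq k^{1-q}$ to get $k^{1-q}|\Omega|=\mathrm{O}(k)$ directly, whereas the paper bounds this term by $\int_\Omega |T_k(w_n)|^{1-q}\,dx$ and then by $C\|T_k(w_n)\|_{X_0^s(\Omega)}^{1-q}$ via H\"older and the Sobolev embedding, which then has to be absorbed into the left-hand side (using $1-q<2$) to close the estimate; your version avoids that absorption step and is slightly cleaner, while the conclusion is identical.
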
 
\begin{proof}
Taking $\phi=T_k(w_n)$ as a test function in \eqref{Eq6} and invoking \cite[Proposition 3]{Cite40} we get: 
\begin{equation}
\label{Eq10}
\begin{aligned}
\int_{\mathbb{R}^N} & |(-\Delta )^{\frac{s}{2}} T_k(w_n)|^2 \, dx  \leq \int_{\Omega} w_n^{-q} T_k(w_n) \, dx \\
& \quad + \int_{\Omega} f_n(x)h_n(w_n+\frac{1}{n}) T_k(w_n) \, dx + \int_{\Omega}T_k(w_n) \mu_n  \, dx.
\end{aligned}
\end{equation}
For the first term on the right-hand side of \eqref{Eq10} we can write the following estimate:
\begin{align}
\nonumber \int_{\Omega} w_n^{-q} T_k(w_n) \, dx & = \int_{\{w_n \geq k\}} w_n^{-q} k \, dx + \int_{\{w_n <k\}} w_n^{1-q} \, dx  \\ \nonumber
& \leq \int_{\{w_n \geq k\}} k^{1-q} \, dx + \int_{\{w_n <k\}} |T_k(w_n)|^{1-q} \, dx \\ \nonumber 
& = \int_{\{w_n \geq k\}} k^{1-q} \, dx + \int_{\Omega} |T_k(w_n)|^{1-q} \, dx - \int_{_{\{w_n \geq k\}}} k^{1-q} \, dx \\
& = \int_{\Omega} |T_k(w_n)|^{1-q} \, dx. \label{Eq11}
\end{align}
On the other hand, by using H\"older inequality and the embedding \eqref{Eq3.3}:
\begin{equation}
\label{Eq12}
\int_{\Omega} |T_k(w_n)|^{1-q} \, dx \leq C \Bigg( \int_{\Omega} |T_k(w_n)|^{2_s^*} \, dx \Bigg)^{\frac{1-q}{2_s^*}} \leq S^{1-q}C \| T_k(w_n)\|_{X_0^s(\Omega)}^{1-q},
\end{equation}
where $S$ is the best constant in the embedding of $ X_0^s(\Omega) \hookrightarrow L^{2_s^*}(\Omega)$.

For the second term in \eqref{Eq10}, first of all note that 
$$\frac{T_k(w_n)}{(w_n+\frac{1}{n})^{\gamma}} \leq \frac{w_n}{(w_n+\frac{1}{n})^{\gamma}} = \frac{w_n^{\gamma}}{(w_n+\frac{1}{n})^{\gamma} w_n^{\gamma-1}} \leq w_n^{1-\gamma}.$$
Now by using assumption \ref{Con2}, we deduce:
\begin{align}
\nonumber 
\int_{\Omega} f_n h_n(w_n+\frac{1}{n}) T_k(w_n) \, dx & \leq C_1 \int_{\{w_n+\frac{1}{n} < \underline{K}\}} \frac{f_n T_k(w_n)}{(w_n+\frac{1}{n})^{\gamma}} \\ \nonumber 
& \quad + \max_{[\underline{K},\overline{K}]} h \int_{\{\underline{K} \leq w_n+\frac{1}{n} \leq \overline{K}\}} f_n T_k(w_n) \, dx \\ \nonumber
& \quad + C_2 \int_{\{w_n+\frac{1}{n} > \overline{K}\}} \frac{f_n T_k(w_n)}{(w_n+\frac{1}{n})^{\theta}} \, dx \\ \nonumber
& \leq C_1 \underline{K}^{1-\gamma} \int_{\{w_n+\frac{1}{n} < \underline{K}\}} f \, dx \\ \nonumber 
& \quad + k \max_{[\underline{K},\overline{K}]} h \int_{\{\underline{K} \leq w_n+\frac{1}{n} \leq \overline{K}\}} f \, dx \\ \nonumber 
& \quad + \frac{k C_2}{\overline{K}^{\theta}}  \int_{\{w_n+\frac{1}{n} > \overline{K}\}} f \, dx \\
& \leq \Big(C_1 \underline{K}^{1-\gamma} + k \max_{[\underline{K},\overline{K}]} h + \frac{k C_2}{\overline{K}^{\theta}} \Big) \|f\|_{L^1} = C_4+kC_3. \label{Eq13}
\end{align}
Also for the last term:
\begin{equation}
\label{Eq14}
\int_{\Omega}T_k(w_n)  \mu_n \, dx \leq k C_5.
\end{equation}
Thus from \eqref{Eq10}, \eqref{Eq11}, \eqref{Eq12}, \eqref{Eq13} and \eqref{Eq14} we obtain:
\begin{equation}
\label{Eq15}
\int_{\mathbb{R}^N} |(-\Delta )^{\frac{s}{2}} T_k(w_n)|^2 \, dx \leq S^{1-q}C \| T_k(w_n)\|_{X_0^s(\Omega)}^{1-q} + C_4+kC_3 +  k C_5. 
\end{equation}
Now \eqref{Eq15} gives $ \| T_k(w_n)\|_{X_0^s(\Omega)}^2 \leq C_6 \| T_k(w_n)\|_{X_0^s(\Omega)}^{1-q} +  C_4+kC_7 $, which implies the boundedness of $\{T_k(w_n)\}$ in $X_0^s(\Omega)$. This completes the proof of Proposition.
\end{proof}
Now, we have the following Proposition in the spirit of \cite[Theorem 23]{Cite40} and \cite[Theorem 4.10]{Cite37}.
\begin{proposition}
\label{Pro3}
$\{w_n\}_{n=1}^{\infty}$ is bounded in $L^p(\Omega)$, for all $p < \frac{N}{N-2s}$ and bounded in $X_0^{s_1,p}(\Omega)$, for all $s_1<s$ and for all $p < \frac{N}{N-s}$.
\end{proposition}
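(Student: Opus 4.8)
The plan is to combine the uniform truncation estimate of Proposition~\ref{Pro2} with a Marcinkiewicz / Boccardo--Gallou\"et type argument adapted to the non-local operator, as in \cite[Theorem 23]{Cite40} and \cite[Theorem 4.10]{Cite37}. \emph{Step 1 (the $L^p$ bound).} By Proposition~\ref{Pro2}, $\|T_k(w_n)\|_{X_0^s(\Omega)}^2\le Ck$ for all $k\ge1$, with $C$ independent of $n$. Using the Sobolev embedding \eqref{Eq3.3} and Chebyshev's inequality,
\[
k^{2_s^*}\,\omega\big(\{w_n\ge k\}\big)\le\int_\Omega|T_k(w_n)|^{2_s^*}\,dx\le S^{2_s^*}\,\|T_k(w_n)\|_{X_0^s(\Omega)}^{2_s^*}\le C\,k^{2_s^*/2},
\]
hence $\omega(\{w_n\ge k\})\le C\,k^{-\frac{N}{N-2s}}$ uniformly in $n$, i.e. $\{w_n\}$ is bounded in $M^{\frac{N}{N-2s}}(\Omega)$. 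By the embedding \eqref{Eq3.2}, $\{w_n\}$ is then bounded in $L^p(\Omega)$ for every $p<\frac{N}{N-2s}$, which is the first assertion.

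\emph{Step 2 (dyadic energy of the level pieces).} Fix $s_1<s$ and $1\le p<\frac{N}{N-s}$; observe that $p<2$ since $N>2s$. For $j\ge0$ set $S_j:=T_{2^{j+1}}(w_n)-T_{2^j}(w_n)$ and $S_{-1}:=T_1(w_n)$; since $w_n>0$, each $S_j$ has the form $\psi_j(w_n)$ with $\psi_j$ non-decreasing and $1$-Lipschitz, satisfies $0\le S_j\le2^j$, is supported in $\{w_n>2^j\}$, and $w_n=\sum_{j\ge-1}S_j$ pointwise. Testing the weak formulation of \eqref{Eq6} with $\phi=S_j\in X_0^s(\Omega)\cap L^\infty(\Omega)$ and using, exactly as for $T_k$ in Proposition~\ref{Pro2}, the pointwise inequality $(a-b)(\psi_j(a)-\psi_j(b))\ge(\psi_j(a)-\psi_j(b))^2$ (see \cite[Proposition 3]{Cite40}), the left-hand side bounds a positive constant multiple of $\|S_j\|_{X_0^s(\Omega)}^2$. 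On the right-hand side each term is controlled using $0\le S_j\le2^j$, the smallness $\omega(\{w_n>2^j\})\le C2^{-j\frac{N}{N-2s}}$ from Step 1, the monotonicity and decay of $h$ from~\ref{Con2}, $\|f\|_{L^1(\Omega)}$, the uniform bound on $\|\mu_n\|_{L^1(\Omega)}$, and (for $j=-1$) the $L^1$ bound of Step 1; the term carrying $\mu_n$ dominates, so that
\[
\|S_j\|_{X_0^s(\Omega)}^2\le C\,2^j,\qquad\omega\big(\mathrm{supp}\,S_j\big)\le C\,2^{-j\frac{N}{N-2s}},\qquad\|S_j\|_{L^\infty(\Omega)}\le2^j,
\]
with $C$ independent of $n$ and $j$.

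\emph{Step 3 (from slices to the Gagliardo seminorm).} For each $j$ estimate $\|S_j\|_{X_0^{s_1,p}(\Omega)}^p$ by splitting $D_\Omega$ into $\{|x-y|<R\}$ and $\{|x-y|\ge R\}$. On the near region, H\"older's inequality with exponents $\frac2p$ and $\frac2{2-p}$ (admissible because $p<2$), together with the fact that the integrand vanishes unless $x$ or $y$ lies in $\mathrm{supp}\,S_j$, produces a bound by a constant times $\|S_j\|_{X_0^s(\Omega)}^p\,\omega(\mathrm{supp}\,S_j)^{\frac{2-p}{2}}\,R^{(s-s_1)p}$; the near-diagonal kernel integral is finite exactly because the relevant exponent stays below $N$, which is precisely the requirement $s_1<s$. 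On the far region, since $S_j$ vanishes off $\Omega$, the bound is a constant times $\|S_j\|_{L^p(\Omega)}^p\,R^{-s_1p}$. Optimizing in $R$ and inserting the three displayed estimates gives $\|S_j\|_{X_0^{s_1,p}(\Omega)}^p\le C\,2^{-\delta j}$ for some $\delta>0$, and verifying $\delta>0$ uses exactly $p<\frac{N}{N-s}$. Finally, from $|w_n(x)-w_n(y)|\le\sum_{j\ge-1}|S_j(x)-S_j(y)|$ and the weighted triangle inequality $\big(\sum_j a_j\big)^p\le C_\beta\sum_j2^{\beta pj}a_j^p$ (any $\beta>0$),
\[
\iint_{D_\Omega}\frac{|w_n(x)-w_n(y)|^p}{|x-y|^{N+s_1p}}\,dx\,dy\le C_\beta\sum_{j\ge-1}2^{\beta pj}\,\|S_j\|_{X_0^{s_1,p}(\Omega)}^p\le C\sum_{j\ge-1}2^{(\beta p-\delta)j},
\]
which is finite and uniform in $n$ once $0<\beta<\delta/p$. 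Together with Step 1 this yields the boundedness of $\{w_n\}$ in $X_0^{s_1,p}(\Omega)$.

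\emph{Main obstacle.} The crux is non-locality: only the truncations $T_k(w_n)$, not $w_n$ itself, are controlled in $X_0^s(\Omega)$, so the Gagliardo seminorm must be reassembled from dyadic slices, trading the \emph{growing} energy $\|S_j\|_{X_0^s(\Omega)}^2\sim2^j$ against the \emph{shrinking} measure of $\mathrm{supp}\,S_j$. Since $\mu_n$ does not decay on the super-level sets of $w_n$, the per-slice energy bound cannot be improved below $O(2^j)$; the series nevertheless converges because of the joint effect of the support smallness, the sharp range $p<\frac{N}{N-s}$, and the differentiability loss $s_1<s$ that renders the near-diagonal kernel integrable. All constants are independent of $n$ because Proposition~\ref{Pro2} is uniform in $n$, and the admissibility of $S_j$ as a test function for each fixed $n$ follows from the well-posedness underlying Proposition~\ref{Pro1}.
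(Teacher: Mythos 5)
Your proof is correct, and your Step 1 coincides with the paper's argument for the $L^p$ bound (the truncation estimate of Proposition~\ref{Pro2}, Sobolev, and Chebyshev giving the $M^{\frac{N}{N-2s}}(\Omega)$ bound as in \eqref{Eq19}). For the second assertion, however, you take a genuinely different route. The paper never touches the Gagliardo seminorm of $w_n$ directly: it estimates the distribution function of $(-\Delta)^{\frac{s}{2}}w_n$ by splitting as in \eqref{Eq17}, combining \eqref{Eq18} and \eqref{Eq19}, and optimizing $k=t^{\frac{N-2s}{N-s}}$ to conclude that $\{(-\Delta)^{\frac{s}{2}}w_n\}$ is bounded in $M^{\frac{N}{N-s}}(\Omega)$, hence in $L^p(\Omega)$ for $p<\frac{N}{N-s}$; membership of $w_n$ in $X_0^{s_1,p}(\Omega)$ for $s_1<s$ is then obtained by citing the lifting theorem \cite[Theorem 5 (C), Chapter 5]{Cite41}. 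You instead reassemble $\|w_n\|_{X_0^{s_1,p}(\Omega)}$ from the dyadic slices $S_j=T_{2^{j+1}}(w_n)-T_{2^j}(w_n)$, trading the energy growth $\|S_j\|_{X_0^s(\Omega)}^2\lesssim 2^j$ against the support decay $\omega(\mathrm{supp}\,S_j)\lesssim 2^{-j\frac{N}{N-2s}}$ via the near/far splitting of the kernel and an optimization in $R$; your exponent bookkeeping is right (the slice exponent is a convex combination of $\frac{p-\alpha(2-p)}{2}$ and $p-\alpha$ with $\alpha=\frac{N}{N-2s}$, both negative precisely when $p<\frac{N}{N-s}$). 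What the paper's route buys is brevity, at the cost of relying on the external lifting theorem and on the somewhat delicate step \eqref{Eq18}, which identifies $(-\Delta)^{\frac{s}{2}}w_n$ with $(-\Delta)^{\frac{s}{2}}T_k(w_n)$ on the sublevel set $\{w_n<k\}$ --- a point that is nontrivial for a non-local operator and is the reason the paper defers to \cite[Theorem 23]{Cite40} and \cite[Theorem 4.10]{Cite37}. Your route is longer but self-contained, stays entirely at the level of the Gagliardo seminorm (so no pointwise meaning of $(-\Delta)^{\frac{s}{2}}w_n$ is needed), and makes the roles of $s_1<s$ (integrability of the near-diagonal kernel) and $p<\frac{N}{N-s}$ (summability of the slices) completely transparent.
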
 
\begin{proof}
Once more from the embedding \eqref{Eq3.3} and the previous Proposition we derive:
\begin{equation}
\label{Eq16}
\Bigg( \int_{\Omega} | T_k(w_n)|^{2_s^*} \, dx \Bigg)^{\frac{2}{2_s^*}} \leq S^2 \int_{\mathbb{R}^N} |(-\Delta )^{\frac{s}{2}} T_k(w_n)|^2 \, dx \leq kC.
\end{equation}

Now, consider the set $\{ x \in \Omega \, : \, \big|(-\Delta )^{\frac{s}{2}} w_n(x) \big| \geq t \}$ which the following estimate holds for it:
$$
\begin{aligned}
& \Big\{  x \in \Omega \, : \, \big|(-\Delta )^{\frac{s}{2}} w_n(x) \big| \geq t, \, w_n(x)<k \Big\} \cup \Big\{  x \in \Omega \, : \,  \big|(-\Delta )^{\frac{s}{2}} w_n(x) \big| \geq t, \, w_n(x) \geq k \Big\} \\
& \subset \Big\{  x \in \Omega \, : \,  \big|(-\Delta )^{\frac{s}{2}} w_n(x) \big| \geq t, \, w_n(x)<k  \Big\} \cup \Big\{  x \in \Omega \, : \,  w_n(x) \geq k \Big\} \subset \Omega.
\end{aligned}
$$
Then using the subadditivity property of Lebesgue measure $\omega$ we have:
\begin{equation}
\label{Eq17}
\begin{aligned}
\omega \Big( \Big\{ x \in \Omega \, : \, \big|(-\Delta )^{\frac{s}{2}} w_n(x) \big| \geq t \Big\} \Big) & \leq \omega \Big( \Big\{  x \in \Omega \, : \,  \big|(-\Delta )^{\frac{s}{2}} w_n(x) \big| \geq t, \, w_n(x)<k  \Big\} \Big) \\
& \quad + \omega \Big( \Big\{ x \in \Omega \, : \, w_n(x) \geq k \Big\} \Big).
\end{aligned}
\end{equation}
Now, by using the Chebyshev's inequality and the previous Proposition:
\begin{align}
 \nonumber \omega \Big( \Big\{ x \in \Omega \, : \, \big|(-\Delta )^{\frac{s}{2}} w_n(x) \big| \geq t, \, w_n(x)<k  \Big\} \Big) & \leq \frac{1}{t^2} \int_{\mathbb{R}^N} | (-\Delta )^{\frac{s}{2}} T_k(w_n)|^2 \, dx \\
& \leq \frac{kC}{t^2}, \qquad \forall k\geq 1.  \label{Eq18}
\end{align}
Also from \eqref{Eq16} it is obvious that $k^2 \omega (\{x \in \Omega \, : \, w_n(x) \geq k\})^{\frac{2}{2_s^*}} \leq kC$, or:
\begin{equation}
\label{Eq19}
\omega \Big( \Big\{x \in \Omega \, : \, w_n(x) \geq k \Big\} \Big) \leq \frac{C}{k^{\frac{N}{N-2s}}}, \qquad \forall k\geq 1. 
\end{equation}
Therefore $\{w_n\}$ is bounded in the Marcinkiewicz space $M^{\frac{N}{N-2s}}(\Omega)$ and by the continuous embedding \eqref{Eq3.2}, $w_n$ is bounded in $L^p(\Omega)$, for all $p < \frac{N}{N-2s}$. Also, from \eqref{Eq17}, \eqref{Eq18} and \eqref{Eq19} we have:
$$ \omega \Big( \Big\{x \in \Omega \, : \, \big|(-\Delta )^{\frac{s}{2}} w_n(x) \big| \geq t \Big\} \Big) \leq \frac{kC}{t^2} + \frac{C}{k^{\frac{N}{N-2s}}}, \qquad \forall k\geq 1.$$
Choosing $k=t^{\frac{N-2s}{N-s}}$ gives:
$$  \omega \Big( \Big\{x \in \Omega \, : \, \big|(-\Delta )^{\frac{s}{2}} w_n(x) \big| \geq t \Big\} \Big) \leq \frac{C}{t^{\frac{N}{N-2s}}} + \frac{C}{t^{\frac{N}{N-s}}} \leq \frac{2C}{t^{\frac{N}{N-s}}}, \qquad \forall t\geq 1.  $$
Therefore $\{ (-\Delta )^{\frac{s}{2}} w_n \}$ is bounded in $M^{\frac{N}{N-s}}(\Omega)$ and again the embedding \eqref{Eq3.2} implies the boundedness of $\{ (-\Delta )^{\frac{s}{2}} w_n\}$ in $L^p(\Omega)$, for all $p < \frac{N}{N-s}$. Now, by invoking \cite[Theorem 5 (C) in chapter 5]{Cite41} we get the boundedness of $\{w_n\}$ in $X_0^{s_1,p}(\Omega)$, for all $s_1<s$ and for all $p<\frac{N}{N-s}$.
\end{proof}

Now we are ready to prove the Theorem \ref{Thm1}.

\begin{proof}[Proof of Theorem \ref{Thm1}]
There exists $ u \in X_0^{s_1,p}(\Omega)$ such that up to a subsequence $ w_n \to u$ weakly in $X_0^{s_1,p}(\Omega)$, for all $s_1<s$ and for all $p < \frac{N}{N-s}$. This implies:
$$ \lim_{n \to \infty} \int_{\mathbb{R}^N} (-\Delta )^{\frac{s}{2}} w_n (-\Delta )^{\frac{s}{2}} \phi \, dx = \int_{\mathbb{R}^N} u (-\Delta)^s \phi \, dx, \qquad \forall \phi \in \mathcal{T}.$$
Also, using the embedding \eqref{Eq3.350}, up to a subsequence we may assume that:
\begin{itemize}[leftmargin=*]
\item
$ w_n \to u $ in $L^r(\Omega)$, for any $r \in [1,p_{s_1}^*)$, where $p_{s_1}^*=\frac{pN}{N-p{s_1}}$.
\item
$w_n(x) \to u(x)$ pointwise a.e. in $\Omega$.
\item
There exists $d \in L^r(\Omega)$, for any $r \in [1,p_{s_1}^*)$, such that $|w_n(x)| \leq d(x)$, a.e. in $\Omega$ for all $n$. 
\end{itemize} 
Now for every fixed $\phi \in \mathcal{T}$, by the dominated convergence theorem, we could pass to the limit and obtain:
$$
\begin{aligned}
& \int_{\Omega} w_n^{-q} \phi \, dx \to  \int_{\Omega} u^{-q} \phi \, dx \\
& \int_{\Omega} f_n h_n(w_n+\frac{1}{n}) \phi \, dx \to \int_{\Omega} f h(u) \phi \, dx.
\end{aligned}
$$
Therefore, $u \in X_0^{s_1,p}(\Omega)$, for all $s_1<s$ and for all $p < \frac{N}{N-s}$, is a distributional solution to \eqref{Eq1}. This means that:
$$
\int_{\mathbb{R}^N} u (-\Delta)^s \phi \, dx = \int_{\Omega} u^{-q} \phi \, dx + \int_{\Omega}  f h(u) \phi \, dx + \int_{\Omega}\phi   \, d\mu, \quad \forall \phi \in \mathcal{T}.
$$
Since for every $K \Subset \Omega$, there exists $C_K>0$ such that $w_n(x) \geq C_K$ a.e. in $K$ and also $w_n \equiv 0$ in $\big(\mathbb{R}^N \setminus \Omega \big)$ and because of the pointwise convergence, i.e. $w_n(x) \to u(x)$ a.e. in $\Omega$, thus $u$ is a weak solution to \eqref{Eq1}.
\end{proof} 

\section{Discussion the notion of entropy solution and its uniqueness}
\label{Section4}
As we mentioned it earlier in Introduction, for PDE's concerning measure data problems, the notion of distributional solution does not ensure uniqueness. For this reason, we want to construct an entropy solution (see Definition \ref{DEFN}) and investigate its uniqueness. At first, we show the uniqueness. We will follow the idea of \cite[Section 5]{Cite24}.

Let $u$ and $v$ be two entropy solutions. Testing $u$ with $\phi=T_r(v)$ and $v$ with $T_r(u)$ in the weak formulation of entropy inequalities, we have:
\begin{equation}
\label{Eq20}
\begin{aligned}
& \int_{\{|u-T_r(v)|<k\}} (-\Delta)^{\frac{s}{2}} u (-\Delta)^{\frac{s}{2}} (u-T_r(v)) \, dx \leq \int_{\Omega} u^{-q} T_k(u-T_r(v)) \, dx \\
& \qquad \qquad + \int_{\Omega}  f h(u) T_k(u-T_r(v)) \, dx + \int_{\Omega}T_k(u-T_r(v))  \, d\mu
\end{aligned}
\end{equation}
and
\begin{equation}
\label{Eq21}
\begin{aligned}
& \int_{\{|v-T_r(u)|<k\}} (-\Delta)^{\frac{s}{2}} v (-\Delta)^{\frac{s}{2}} (v-T_r(u)) \, dx \leq \int_{\Omega} v^{-q} T_k(v-T_r(u)) \, dx \\
& \qquad \qquad + \int_{\Omega}  f h(v) T_k(v-T_r(u)) \, dx + \int_{\Omega} T_k(v-T_r(u))  \, d\mu. 
\end{aligned}
\end{equation}
Adding up the left-hand sides of \eqref{Eq20} and \eqref{Eq21} and restricting them to the set
$$ A_0^r=\{ x \in \Omega \,:\, |u-v|<k, \, |u|<r, \, |v|<r \}, $$
we obtain:
\begin{equation}
\label{Eq22}
I_0:= \int_{A_0^r} |(-\Delta)^{\frac{s}{2}} (u-v)|^2 \, dx.
\end{equation}
Also, summing the right-hand sides of \eqref{Eq20} and \eqref{Eq21} when restricted to $A_0^r$ gives:
$$ J_0:=\int_{A_0^r}(u^{-q}-v^{-q}) (u-v) \, dx + \int_{A_0^r} f(x) (h(u)-h(v))(u-v)  \, dx, $$
which is obviously non-positive, i.e. $J_0 \leq 0$. Therefore:
\begin{equation}
\label{Eq23}
I_0=\int_{A_0^r} |(-\Delta)^{\frac{s}{2}} (u-v)|^2 \, dx \leq 0.
\end{equation}
Now, consider the set $ A_1^r=\{ x \in \Omega \,:\, |u-T_r(v)|<k, \, |v| \geq r \}$. 
When restricted to $A_1^r$, the inequality \eqref{Eq20} becomes as follows:
\begin{equation}
\label{Eq24}
\begin{aligned}
\int_{A_1^r} |(-\Delta)^{\frac{s}{2}} u|^2 \, dx  & \leq \int_{A_1^r} u^{-q} (u-r) \, dx \\
& \quad + \int_{A_1^r} f h(u) (u-r) \, dx + \int_{A_1^r}  (u-r)  \, d\mu.
\end{aligned}
\end{equation}
Finally on the set $ A_2^r=\{ x \in \Omega \,:\, |u-T_r(v)|<k, \, |v| < r, \, |u| \geq r \}$, the inequality \eqref{Eq20} is as follows:
\begin{equation}
\label{Eq25}
\begin{aligned}
\int_{A_2^r} (-\Delta)^{\frac{s}{2}} u (-\Delta)^{\frac{s}{2}} (u-v) \, dx  &  \leq  \int_{A_2^r} u^{-q} (u-v) \, dx \\
&  \quad + \int_{A_2^r}  f h(u)(u-v) \, dx + \int_{A_2^r} (u-v)  \, d\mu. 
\end{aligned}
\end{equation}
Similarly, we can estimate \eqref{Eq21} on the sets $B_1^r=\{x \in \Omega \,:\, |v-T_r(u)|<k, \, |u| \geq r \}$ and $B_2^r=\{ x \in \Omega \,:\, |v-T_r(u)|<k, \, |u| < r, \, |v| \geq r  \}$ and find that:
\begin{equation}
\label{Eq26}
\begin{aligned}
\int_{B_1^r} |(-\Delta)^{\frac{s}{2}} v|^2 \, dx  & \leq \int_{B_1^r} v^{-q} (v-r) \, dx \\
& \quad + \int_{B_1^r} f h(v) (v-r) \, dx + \int_{B_1^r}  (v-r)  \, d\mu
\end{aligned}
\end{equation}
and
\begin{equation}
\label{Eq27}
\begin{aligned}
\int_{B_2^r} (-\Delta)^{\frac{s}{2}} v (-\Delta)^{\frac{s}{2}} (v-u) \, dx &  \leq  \int_{B_2^r} v^{-q} (v-u) \, dx \\
& \quad + \int_{B_2^r} f h(v)(v-u) \, dx + \int_{B_2^r}  (v-u)  \, d\mu.
\end{aligned}
\end{equation}

Notice that the right-hand sides of \eqref{Eq24}, \eqref{Eq25}, \eqref{Eq26}, and \eqref{Eq27} goes to zero as $r \to \infty$. Then by combining \eqref{Eq23}, \eqref{Eq24}, \eqref{Eq25}, \eqref{Eq26}, and \eqref{Eq27} we deduce:
$$ I_0=\int_{A_0^r} |(-\Delta)^{\frac{s}{2}} (u-v)|^2 \, dx \leq \mathrm{o}(r), \qquad r \to \infty. $$
Since $A_0^r$ goes to $\{ x\in \Omega \,:\, |u-v|<k \}$, as $r \to \infty$, we obtain that:
$$  \int_{|u-v|<k} |(-\Delta)^{\frac{s}{2}} (u-v)|^2 \, dx \leq 0, $$
or
$$  \int_{\mathbb{R}^N} |(-\Delta)^{\frac{s}{2}} T_k(u-v)|^2 \, dx \leq 0, \qquad \forall k. $$
Therefore, $T_k(u-v) \equiv 0$, for all $k$, and the uniqueness is proved.

For constructing an entropy solution, we assume that $\mu \in L^1(\Omega)$. The reason is that for a non-negative bounded Radon measure $\mu$, generally it is not possible to approximate it with an increasing sequence of $L^{\infty}(\Omega)$ functions, \cite{Cite42}. But for the case $\mu \in L^1(\Omega)$, this can always be done by the usual truncation technique. In the following argument, the increasing sequence of approximations for $\mu$  will ensure the existence of an increasing sequence of solutions to the following approximating problems:
\begin{equation}
\label{Eq30}
\begin{cases}
(- \Delta)^s u_n = u_n^{-q}+ f_n h_n(u_n+\frac{1}{n})+ T_n(\mu) &   \mathrm{in} \,\, \Omega,\\ u_n>0 &   \mathrm{in} \,\, \Omega, \\ u_n=0 &   \mathrm{in} \,\,  \big(\mathbb{R}^N \setminus \Omega \big).
\end{cases}
\end{equation}
Outline of the construction is as follows. 

In the same way of section \ref{Section3}, it is possible to show that $\{ T_k(u_n-\phi)\}_{n=1}^{\infty}$ is a bounded sequence in $X_0^s(\Omega)$ for each fixed $k$ and each fixed $\phi \in X_0^s(\Omega) \cap L^{\infty}(\Omega)$. Also, $\{ T_k(u_n-\phi)\}_{n=1}^{\infty}$ is an increasing sequence of non-negative functions by the strict monotonicity of the operator $(- \Delta)^s u -u^{-q}$ and the increasing behavior of $h_n(u_n+\frac{1}{n}) f_n + T_n(\mu)$. Therefore, up to a subsequence $T_k(u_n-\phi) \to T_k(u-\phi)$ weakly in $X_0^s(\Omega)$ as $n \to \infty$, where $u$ is the weak solution to \eqref{Eq1} with $\mu \in L^1(\Omega)$. Once more the strict monotonicity of $(- \Delta)^s $ implies that $ T_k(u_n-\phi) \to T_k(u-\phi)$ strongly in $X_0^s(\Omega)$ (see for example \cite[Lemma 2.18]{Cite37} for this compactness result). Now, using $T_k(u_n-\phi)$ as a test function in \eqref{Eq30}, and invoking the similar estimates as in \eqref{Eq4.700}, we may pass to the limit by the Vitali convergence theorem and find an entropy solution even with the equalities instead of the inequalities in Definition \ref{DEFN}, i.e. \eqref{Eq3.500}.

\end{document}